\begin{document}
\newtheorem{thm}{Theorem}[section]
\newtheorem{lem}[thm]{Lemma}
\newtheorem{prop}[thm]{Proposition}
\newtheorem{cor}[thm]{Corollary}
\newtheorem{assum}[thm]{Assumption}
\newtheorem{rem}[thm]{Remark}
\newtheorem{defn}[thm]{Definition}
\newtheorem{clm}[thm]{Claim}
\newcommand{\lv}{\left \vert}
\newcommand{\rv}{\right \vert}
\newcommand{\lV}{\left \Vert}
\newcommand{\rV}{\right \Vert}
\newcommand{\la}{\left \langle}
\newcommand{\ra}{\right \rangle}
\newcommand{\ltm}{\left \{}
\newcommand{\rtm}{\right \}}
\newcommand{\lcm}{\left [}
\newcommand{\rcm}{\right ]}
\newcommand{\ket}[1]{\lv #1 \ra}
\newcommand{\bra}[1]{\la #1 \rv}
\newcommand{\lmk}{\left (}
\newcommand{\rmk}{\right )}
\newcommand{\al}{{\mathcal A}}
\newcommand{\md}{M_d({\mathbb C})}
\newcommand{\ali}[1]{{\mathfrak A}_{[ #1 ,\infty)}}
\newcommand{\alm}[1]{{\mathfrak A}_{(-\infty, #1 ]}}
\newcommand{\nn}[1]{\lV #1 \rV}
\newcommand{\br}{{\mathbb R}}
\newcommand{\dm}{{\rm dom}\mu}
\newcommand{\Ad}{\mathop{\mathrm{Ad}}\nolimits}
\newcommand{\Proj}{\mathop{\mathrm{Proj}}\nolimits}
\newcommand{\RRe}{\mathop{\mathrm{Re}}\nolimits}
\newcommand{\RIm}{\mathop{\mathrm{Im}}\nolimits}
\def\qed{{\unskip\nobreak\hfil\penalty50
\hskip2em\hbox{}\nobreak\hfil$\square$
\parfillskip=0pt \finalhyphendemerits=0\par}\medskip}
\def\proof{\trivlist \item[\hskip \labelsep{\bf Proof.\ }]}
\def\endproof{\null\hfill\qed\endtrivlist\noindent}
\def\proofof[#1]{\trivlist \item[\hskip \labelsep{\bf Proof of #1.\ }]}
\def\endproofof{\null\hfill\qed\endtrivlist\noindent}

%%%%%%%%%%%%%%%%%%%%%%%%%%%%%%
\title{Normal states of type III factors}
\author{
{\sc Yasuyuki Kawahigashi}\footnote{Supported in part by the
Global COE Program ``The research and training center for
new development in mathematics'', the Mitsubishi Foundation
Research Grants and the Grants-in-Aid for Scientific Research, JSPS.}\\
{\small Graduate School of Mathematical Sciences}\\
{\small The University of Tokyo, Komaba, Tokyo, 153-8914, Japan}
\\[0,05cm]
{\small and}
\\[0,05cm]
{\small Kavli IPMU (WPI), the University of Tokyo}\\
{\small 5-1-5 Kashiwanoha, Kashiwa, 277-8583, Japan}\\
{\small e-mail: {\tt yasuyuki@ms.u-tokyo.ac.jp}}
\\[0,3cm]
{\sc Yoshiko Ogata}\footnote{Supported in part by
the Inoue Foundation and the Grants-in-Aid for
Scientific Research, JSPS.}\\
{\small Graduate School of Mathematical Sciences}\\
{\small The University of Tokyo, Komaba, Tokyo, 153-8914, Japan}
\\[0,3cm]
{\sc Erling St\o rmer}\\
{\small Department of Mathematics, University of Oslo}\\
{\small P.O.Box 1053 Blindern, NO-0316 Oslo, Norway}}

\maketitle{}
\centerline{\sl Dedicated to Masamichi Takesaki on the occasion of his
eightieth birthday}
\begin{abstract}
Let $M$ be a factor of type III with separable predual and with normal
states $\varphi_1,\dots,\varphi_k, \omega$ with $\omega$ faithful.  
Let $A$ be a finite dimensional $C^*$-subalgebra of $M$. 
Then it is shown that there is a unitary operator $u\in M$ such that 
$\varphi_i \circ \Ad u = \omega$ on $A$ for $i=1,\dots,k$. 
This follows from an embedding result of a finite dimensional
$C^*$-algebra with a faithful state into $M$ with finitely many
given states.  We also give similar embedding results of
$C^*$-algebras and von Neumann algebras with faithful states
into $M$.
Another similar result for a factor of type II$_1$ instead
of type III holds.
\end{abstract}

\section{Introduction}

Let $M$ be a factor of type III with separable predual. Then two
nonzero projections $e$ and $f$ in $M$ are equivalent, i.e.,
there exists a partial isometry $v\in M$ such that $v^*v=e$,
$vv^*=f$. If furthermore $e$ and $f$ are different from the
identity operator $1$, then there is a unitary operator $u\in M$
such that $u^* eu=f$. This shows that there is an abundance of
unitaries in $M$, so one might expect stronger results arising
from these unitaries. That is what is done in the present paper.
We show that if $\varphi$ and $\omega$ are faithful normal states
in $M$ and $A\subset M$ is a finite dimensional $C^*$-algebra,
then there exists a unitary operator $u\in M$ such that the
restrictions $\varphi\circ \Ad u \vert_A$ and $\omega\vert_A$
are equal, where $\Ad u$ is the inner  automorphism
$x\mapsto u^*xu$ of $M$. (See Corollary \ref{uni} for a
more precise and general statement.)

This actually follows from an embedding result of a finite
dimensional $C^*$-algebra $A$ with a faithful state into $M$ with
finitely given normal states.
This result is then applied to obtain a similar result for the 
$C^*$-algebra of the compact operators on a separable Hilbert space. 
Furthermore, we have more general embedding results in
Section \ref{embed} for $C^*$-algebras
and von Neumann algebra with faithful states into a type III factor
$M$ such that a finite number of normal states on $M$ coincide
after the embedding.

If $M$ is not of type III, the corresponding result is false in
general, but if $M$ is a factor of II$_1$ and $\omega=\tau$ is the 
trace and $A\cong M_n({\mathbb C})$, the matrix algebra of complex
$n\times n$-matrices, then the corresponding result to the unitary
equivalence on $A$ holds for
$\omega=\tau$ and any $\varphi$. This will be shown
in Section \ref{two1}.

There exist results of a similar nature to the ones above in the
literature. In \cite{CS}, it has been shown that if $M$ is of
type III$_1$ and $\varepsilon>0$ then there is a unitary
operator $u\in M$ with
\[
\lV \varphi\circ \Ad u-\omega\rV<\varepsilon.
\]

If one takes a pointwise weak limit point of the automorphisms of
the form $\Ad u$ in the above, then one finds a completely
positive unital map $\pi: M\to M$
with $\varphi\circ\pi=\omega$.

In the non-separable case, it has recently been shown by Ando and 
Haagerup that for some factors of type III$_1$ constructed as 
ultraproducts, all faithful normal states are unitarily
equivalent \cite{ah}.

In the $C^*$-algebra case it has been shown in \cite{kos} that
if $\varphi$ and $\omega$ are pure states of a separable
$C^*$-algebra $A$ with the same kernel for their GNS-representations,
then there is an asymptotically inner automorphism $\alpha$ of $A$
such that $\varphi\circ\alpha=\omega$.

Our result gives an exact equality for two states, not an
approximate one, but 
only on a finite dimensional $C^*$-subalgebra $A$.

\section{Factors of type III}\label{three}

In this section we state and prove our main result.

\begin{thm}\label{ma}
Let $M$ be a type III factor with separable predual and
$\varphi_1,\dots,\varphi_k$ normal states on $M$.
Let $A$ be a finite dimensional $C^*$-algebra and $\rho$ a faithful state on $A$.
Then there exists a unital injective homomorphism $\pi:A\to M$ with
\[
\varphi_i\circ\pi=\rho,\quad i=1,\dots,k.
\]
\end{thm}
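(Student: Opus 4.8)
The plan is to realize $A$ by a concrete system of matrix units in $M$, built so that the diagonal projections carry the prescribed weights while all off-diagonal matrix units are annihilated by every $\varphi_i$. First I would diagonalize $\rho$: choosing matrix units $\{e^{(j)}_{pq}\}$ for $A=\bigoplus_{j=1}^m M_{n_j}(\mathbb C)$ adapted to $\rho$, I may assume $\rho(e^{(j)}_{pq})=\lambda^{(j)}_p\delta_{pq}$ with all $\lambda^{(j)}_p>0$ and $\sum_{j,p}\lambda^{(j)}_p=1$. Writing $\pi(e^{(j)}_{pq})=f^{(j)}_{pq}$, the requirement $\varphi_i\circ\pi=\rho$ becomes the demand that the $f^{(j)}_{pq}$ form a unital system of matrix units in $M$ with $\varphi_i(f^{(j)}_{pp})=\lambda^{(j)}_p$ and $\varphi_i(f^{(j)}_{pq})=0$ for $p\ne q$, for every $i$.

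The technical engine is a proportional splitting property (a noncommutative Lyapunov theorem): for any projection $p\in M$, any $t\in[0,1]$, and any finite family of normal functionals $\psi_1,\dots,\psi_r$, there is a subprojection $e\le p$ with $\psi_s(e)=t\,\psi_s(p)$ for all $s$. I would prove this by choosing a diffuse abelian subalgebra $D\cong L^\infty([0,1])$ of the corner $pMp$ — available because a type III factor has no minimal projections — restricting the $\psi_s$ to $D$ as nonatomic (signed or complex) measures, and invoking the classical Lyapunov convexity theorem: the joint range over the projections of $D$ is convex and contains both $0$ and $(\psi_s(p))_s$, hence contains $t(\psi_s(p))_s$. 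Iterating, I can split any projection into $L$ pieces each carrying the fraction $1/L$ of every $\psi_s$.

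From this, the diagonal step is immediate: since $\varphi_i(1)=1$ for all $i$, repeatedly splitting off of $1$ against the states $\varphi_1,\dots,\varphi_k$ produces orthogonal projections $f^{(j)}_{pp}$ summing to $1$ with $\varphi_i(f^{(j)}_{pp})=\lambda^{(j)}_p$ for every $i$; each is nonzero, and, $M$ being type III, any two are equivalent. The off-diagonal step rests on a vanishing lemma: in a type III factor, finitely many normal functionals $\chi_1,\dots,\chi_r$ can be simultaneously killed by a single unitary. Indeed, split $1=\sum_{m=1}^L r_m$ into $L$ equal pieces for the $\chi_s$ and set $u=\sum_{m=1}^L\zeta^m r_m$ with $\zeta=e^{2\pi i/L}$; then $\chi_s(u)=L^{-1}\chi_s(1)\sum_{m=1}^L\zeta^m=0$. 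I then build the matrix units within each block inductively: having chosen partial isometries $v_1=f^{(j)}_{11},v_2,\dots,v_{m-1}$ from $f^{(j)}_{11}$ onto $f^{(j)}_{22},\dots,f^{(j)}_{m-1,m-1}$ with $\varphi_i(v_pv_q^*)=0$ for $p,q<m$, I fix any partial isometry $w_0\colon f^{(j)}_{11}\to f^{(j)}_{mm}$ and choose a unitary $u$ in the type III corner $f^{(j)}_{11}Mf^{(j)}_{11}$ annihilating the finitely many normal functionals $x\mapsto\varphi_i(w_0 x v_q^*)$ $(q<m,\ i\le k)$; setting $v_m=w_0u$ gives $\varphi_i(v_mv_q^*)=0$, and $\varphi_i(v_qv_m^*)=\overline{\varphi_i(v_mv_q^*)}=0$ as well. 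Defining $f^{(j)}_{pq}=v_pv_q^*$ yields a matrix unit system with vanishing off-diagonal state values.

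Assembling, $\pi(e^{(j)}_{pq})=f^{(j)}_{pq}$ extends to a unital $*$-homomorphism by the matrix unit relations, is injective because it is nonzero on each simple summand, and satisfies $\varphi_i\circ\pi=\rho$ by construction. The main obstacle I expect is the proportional splitting lemma: pinning down the diffuse masa in the corner and carrying out the reduction to the classical Lyapunov theorem is the real content, after which the roots-of-unity averaging and the inductive killing of the cross terms $\varphi_i(v_pv_q^*)$ are comparatively routine.
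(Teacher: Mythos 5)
Your proof is correct, and its overall architecture is the same as the paper's: diagonalize $\rho$, produce orthogonal projections carrying the prescribed weights for all $\varphi_i$ simultaneously, then inductively build connecting partial isometries so that every off-diagonal matrix unit is annihilated by every $\varphi_i$. The genuine difference is in the technical engine. The paper's Lemma \ref{ichi} (a special case of the Akemann--Anderson noncommutative Lyapunov theorem) is proved intrinsically: the set $\{x\in M_{+,1}\mid \Phi(x)=\Phi(a)\}$ is $\sigma$-weakly compact and convex, and a Krein--Milman extreme point of it is shown to be a projection. You instead pass to a diffuse abelian subalgebra of the relevant corner and invoke the classical commutative Lyapunov convexity theorem; this is valid and more elementary in its main input, at the cost of the auxiliary facts that such a subalgebra exists in a type III corner and that normal functionals restrict to nonatomic measures on it (both standard). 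Your roots-of-unity unitary $u=\sum_{m}\zeta^m r_m$ killing finitely many normal functionals is the paper's device in disguise: the paper takes $u_{k+1,1}=2vp-v=v(2p-e_{11})$, and $2p-e_{11}$ is exactly your unitary for $L=2$, with the halving projection supplied by Lemma \ref{ichi} applied to $a=e_{11}/2$. Finally, for general finite-dimensional $A$ you build the matrix units block by block, whereas the paper embeds $A$ unitally into $M_m({\mathbb C})$ with $m=\sum_k n_k$, extends $\rho$ to a faithful state there, and restricts; your variant avoids extending the state but obliges you to check that cross-block products of the constructed matrix units vanish (which they do, since the diagonal projections of distinct blocks are orthogonal). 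What the paper's formulation buys is a single flexible lemma, stated for arbitrary $a\in M_{+,1}$ and $\sigma$-weakly continuous ${\mathbb C}^n$-valued maps, applied verbatim at both the diagonal and off-diagonal steps and reused for $K({\cal H})$ in Theorem \ref{cpt}; what yours buys is that the only nontrivial analytic input is the classical Lyapunov theorem.
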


After proving this theorem, we will prove that it
implies the following corollary.

\begin{cor}\label{uni}
Let $M$ be a factor of type III with separable predual.
Let $A$ be a finite dimensional $C^*$-subalgebra of $M$.
Let $\varphi_1,\dots,\varphi_k$ and $\omega$ be normal states on $M$ and assume that $\omega$ is faithful.
Then there exists a unitary operator $u\in M$ such that 
\[
\varphi_i\circ \Ad u\vert_A=\omega\vert_A,\quad i=1,\dots,k.
\]
\end{cor}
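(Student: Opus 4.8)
The plan is to deduce the corollary from Theorem \ref{ma} by comparing the given inclusion $A\subset M$ with a second embedding produced by the theorem. First I would set $\rho:=\omega\vert_A$. Since $\omega$ is a faithful normal state on $M$ and $A$ is a $C^*$-subalgebra, $\rho$ is a faithful state on the finite dimensional algebra $A$: if $a\in A$ with $a\geq 0$ and $\rho(a)=\omega(a^{1/2}a^{1/2})=0$, then faithfulness of $\omega$ on $M$ forces $a^{1/2}=0$, hence $a=0$. Applying Theorem \ref{ma} to the normal states $\varphi_1,\dots,\varphi_k$ and to the pair $(A,\rho)$ yields a unital injective homomorphism $\pi:A\to M$ with $\varphi_i\circ\pi=\rho=\omega\vert_A$ for every $i$.

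At this point I would have two unital $*$-embeddings of the finite dimensional $C^*$-algebra $A$ into $M$: the inclusion $\iota:A\hookrightarrow M$ and the map $\pi$. The goal becomes to find a unitary $u\in M$ with $\Ad u\circ\iota=\pi$, that is $u^* a u=\pi(a)$ for all $a\in A$; granting this, $\varphi_i\circ\Ad u\vert_A=\varphi_i\circ\pi=\omega\vert_A$, which is exactly the assertion. Write $A\cong\bigoplus_{j=1}^m M_{n_j}({\mathbb C})$ and fix a system of matrix units $\{e^{(j)}_{pq}\}$ for $A$, so that $\{e^{(j)}_{pq}\}\subset M$ and $\{\pi(e^{(j)}_{pq})\}\subset M$ are two systems of matrix units whose diagonal entries each sum to $1_M$, both embeddings being unital.

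The unitary is then built by the standard matching-of-matrix-units argument, which is where the type III hypothesis enters. For each block $j$ the minimal projections $e^{(j)}_{11}$ and $\pi(e^{(j)}_{11})$ are nonzero projections in the type III factor $M$, hence Murray--von Neumann equivalent, so there is a partial isometry $v_j\in M$ with $v_j^* v_j=\pi(e^{(j)}_{11})$ and $v_j v_j^*=e^{(j)}_{11}$. Setting
\[
u=\sum_{j=1}^m\sum_{p=1}^{n_j} e^{(j)}_{p1}\, v_j\, \pi(e^{(j)}_{1p}),
\]
a direct computation using the matrix unit relations together with $v_j^* e^{(j)}_{11}v_j=\pi(e^{(j)}_{11})$ and $\sum_{j,p}e^{(j)}_{pp}=1=\sum_{j,p}\pi(e^{(j)}_{pp})$ shows that $u^* u=uu^*=1$ and that $e^{(j)}_{rs}u=u\,\pi(e^{(j)}_{rs})$ for all $j,r,s$, whence $u^* e^{(j)}_{rs}u=\pi(e^{(j)}_{rs})$ and therefore $\Ad u\vert_A=\pi$. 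The only substantive ingredient beyond Theorem \ref{ma} is thus the equivalence of arbitrary nonzero projections in a type III factor, recalled in the introduction; the rest is a routine verification. I do not expect a genuine obstacle, since the comparison theory of projections in a type III factor makes any two unital embeddings of a fixed finite dimensional $C^*$-algebra automatically unitarily equivalent.
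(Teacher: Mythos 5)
Your argument has a genuine gap at its very first step: the corollary does not assume that $A$ contains the unit of $M$. If $p:=1_A\neq 1_M$, then $\rho:=\omega\vert_A$ is \emph{not} a state on $A$ --- it is a faithful positive functional with $\rho(1_A)=\omega(p)<1$, since $\omega$ is faithful and $1-p\neq 0$ --- so Theorem \ref{ma} cannot be applied to the pair $(A,\rho)$. The same omission resurfaces later: your inclusion $\iota$ is unital only when $p=1_M$, and in the general case your formula for $u$ gives $u^*u=\pi(1_A)=1_M$ but $uu^*=\sum_{j,q}e^{(j)}_{qq}=p$, so $u$ is a proper isometry rather than the unitary the corollary demands. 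The repair is short but necessary, and it is exactly the first line of the paper's proof: replace $A$ by the finite dimensional subalgebra $A\oplus{\mathbb C}(1-p)\subset M$, which contains $1_M$ and on which $\omega$ restricts to a faithful state; a unitary achieving the conclusion on this larger algebra achieves it on $A$ a fortiori. With this reduction inserted, your proof is correct.

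Modulo that repair, your route differs from the paper's in the multi-block case, and is arguably more direct. The paper proves the statement first for $A\cong M_m({\mathbb C})$ (by exactly your matching-of-matrix-units unitary, with a single partial isometry), and then, for $A\cong\oplus_l M_{n_l}({\mathbb C})$, it does \emph{not} apply Theorem \ref{ma} to $A$ itself; instead it uses type III-ness to construct a larger subalgebra $B\cong M_m({\mathbb C})$, $m=\sum_l n_l$, of $M$ whose matrix units extend those of $A$, and then invokes the single-block case for $B$. You apply Theorem \ref{ma} to $A$ directly (legitimate, since the theorem is stated for arbitrary finite dimensional $A$) and intertwine the two unital embeddings block by block, with one partial isometry $v_j$ per block; after the reduction, your verifications that $u^*u=uu^*=1$ and $e^{(j)}_{rs}u=u\,\pi(e^{(j)}_{rs})$ are correct, the cross-block terms vanishing because matrix units in distinct blocks annihilate each other. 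Both arguments rest on the same two ingredients --- Theorem \ref{ma} and the equivalence of all nonzero projections in a type III factor --- but yours avoids constructing the auxiliary algebra $B$.
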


Before starting preliminaries of our proof of Theorem \ref{ma}, we give
an outline of our method for the case $A\cong M_d({\mathbb C})$.

After diagonalizing the density matrix of $\rho$, what we have to find is a
system of matrix units $\{e_{ij}\}$ in $M$ for which we have 
$\varphi_n(e_{ij})=\delta_{ij}\lambda_i$ for all $n=1,\dots,k$ and
$i,j=1,\dots, d$, where the
$\lambda_i$'s are eigenvalues of the density matrix of $\rho$.
We first choose $e_{ii}$'s satisfying this condition.  Then we choose
$e_{12}, e_{13},\dots,e_{1d}$ inductively
so that we have various identities
saying that the values of certain linear functionals applied to
a certain partial isometry are all zero at each induction step.
This is done by a version of a noncommutative Lyapunov theorem,
and what we need is a special case of
\cite[Theorem 2.5 (1)]{aa}.  Since the statement and its proof
are short, we include them here in the form we need, for
the sake of conveniece of the reader.

\begin{lem}\label{ichi}
Let $M$ be a non-atomic von Neumann algebra and
$\Phi:M\to{\mathbb C}^n$ a $\sigma$-weakly continuous linear map.
Then for any $a\in M_{+,1}$, there exists a projection
$p\in M$ such that $\Phi(p)=\Phi(a)$.
\end{lem}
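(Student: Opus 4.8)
The plan is to realize the desired projection as an extreme point of a suitable $\sigma$-weakly compact convex set, following the classical Lyapunov convexity philosophy. First I would fix the given $a\in M_{+,1}$ and set
\[
K=\{b\in M : 0\le b\le 1,\ \Phi(b)=\Phi(a)\}.
\]
This set is nonempty since $a\in K$, it is convex because $\Phi$ is linear and the order interval $[0,1]$ is convex, and it is $\sigma$-weakly compact: the order interval $\{b : 0\le b\le 1\}$ is $\sigma$-weakly closed and bounded, hence $\sigma$-weakly compact, and $K$ is its intersection with the $\sigma$-weakly closed affine subspace $\Phi^{-1}(\Phi(a))$. By the Krein--Milman theorem $K$ therefore has an extreme point $b$.

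The heart of the argument is to show that any extreme point $b$ of $K$ is a projection; once this is done, $\Phi(b)=\Phi(a)$ finishes the proof. Suppose $b$ is not a projection. Then for small $\epsilon>0$ the spectral projection $e$ of $b$ for the interval $[\epsilon,1-\epsilon]$ is nonzero, and on the range of $e$ one has $\epsilon e\le ebe\le(1-\epsilon)e$. The corner $eMe$ is again non-atomic (a minimal projection of $eMe$ would be a minimal projection of $M$), hence infinite dimensional, so its self-adjoint part $(eMe)_{sa}$ is an infinite-dimensional real vector space.

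Now I would restrict $\Phi$ to $(eMe)_{sa}$, viewed as a real-linear map into ${\mathbb C}^n\cong{\mathbb R}^{2n}$. Since the domain is infinite dimensional, the kernel is nontrivial, so there is a nonzero self-adjoint $x\in eMe$ with $\Phi(x)=0$; after scaling we may assume $\nn{x}\le\epsilon$. Because $x=exe$ is supported under $e$ and $\nn{x}\le\epsilon$, the estimate $\epsilon e\le ebe\le(1-\epsilon)e$ gives $0\le b\pm x\le 1$, while $\Phi(b\pm x)=\Phi(b)\pm\Phi(x)=\Phi(a)$, so $b\pm x\in K$. But $b=\frac12\lmk(b+x)+(b-x)\rmk$ with $b+x\ne b-x$, contradicting the extremality of $b$. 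Hence $b$ is a projection, as required.

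The main obstacle is this last perturbation step: one must produce a self-adjoint perturbation that simultaneously preserves membership in the order interval $[0,1]$ and the constraint $\Phi(\cdot)=\Phi(a)$. Confining the perturbation to the spectral corner $eMe$, where $b$ is bounded away from both $0$ and $1$, is exactly what keeps $b\pm x$ positive and bounded by $1$; and the non-atomicity of $M$ is precisely what guarantees that this corner is large enough (infinite dimensional) to absorb the finitely many linear constraints imposed by $\Phi$.
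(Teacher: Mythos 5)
Your proof is correct and follows essentially the same route as the paper: both form the $\sigma$-weakly compact convex set of positive contractions with the prescribed $\Phi$-value, extract an extreme point via Krein--Milman, and rule out non-projections by perturbing with a self-adjoint element of the kernel of $\Phi$ supported in the spectral corner where the operator is bounded away from $0$ and $1$. The only differences are cosmetic (your $[\epsilon,1-\epsilon]$ versus the paper's $(\delta,1-\delta)$, and your slightly more detailed justification that the corner is infinite dimensional).
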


\begin{proof}
Let 
\[
D:=\{
x\in M_{+,1}\mid \Phi(x)=\Phi(a)\},
\]
where $M_{+,1}$ denotes the positive operators in the unit
ball of $M$.
Then $D$ is a nonempty $\sigma$-weakly compact convex  set.
Therefore, by the Krein-Milman theorem,
there exists an extremal point $b$ of $D$.
We show $b$ is a projection.
If $b$ were not a projection, then there exists $\delta\in (0,1/2)$
such that the spectral projection
$p$ of $b$ corresponding to $(\delta,1-\delta)$
is nonzero.
By the assumption on $M$, 
$pM_{\mathrm{sa}}p$ is an infinite dimensional real linear space
while its range with respect to $\Phi$
is finite dimensional.
This implies the existence of a non-zero 
$y\in pM_{\mathrm{sa}}p$ such that
$\Phi(y)=0$.
Setting $t:=\delta/\|y\|$,
we have $b\pm ty\in D$.
As we have $b=(b+ ty)/2+(b- ty)/2$,
this contradicts the fact that $b$ is extremal in $D$.
\end{proof}

We now construct
appropriate matrix units by induction on
the size of matrix units.

\begin{lem}\label{main0}
Let $M$ be a type III factor with separable predual and
$\varphi_1,\dots,\varphi_n$ normal states on $M$.
Let $\lambda_i>0$, $i=1,\cdots,m$ with $\sum_i\lambda_i=1$.
Then there exists a system of matrix units
$\{e_{ij}\}_{i,j=1,\dots,m}$ such that
\[
\varphi_l(e_{ij})=\delta_{ij}\lambda_i, \quad\text{for all }
l=1,\dots,m.
\]
\end{lem}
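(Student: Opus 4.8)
The plan is to build the system $\{e_{ij}\}$ in two stages: first the diagonal projections $e_{ii}$, then partial isometries $v_i$ (with $v_1 = e_{11}$) satisfying $v_i^* v_i = e_{ii}$ and $v_i v_i^* = e_{11}$, from which one sets $e_{ij} := v_i^* v_j$. Using $v_j v_k^* = \delta_{jk} e_{11}$ one checks directly the matrix unit relations $e_{ij} e_{kl} = \delta_{jk} e_{il}$ and $e_{ij}^* = e_{ji}$, so that $\{e_{ij}\}$ is a genuine system of matrix units. The required identities then split into $\varphi_l(e_{ii}) = \lambda_i$ and $\varphi_l(v_i^* v_j) = 0$ for $i \ne j$; since $\varphi_l(v_i^* v_j) = \overline{\varphi_l(v_j^* v_i)}$, it suffices to secure the vanishing for $i < j$.

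For the diagonal projections I would induct using Lemma \ref{ichi}. A type III factor is non-atomic, so applying Lemma \ref{ichi} with the $\sigma$-weakly continuous map $\Phi_0(x) = (\varphi_1(x),\dots,\varphi_n(x))$ and $a = \lambda_1 \cdot 1 \in M_{+,1}$ gives a projection $e_{11}$ with $\varphi_l(e_{11}) = \lambda_1$ for every $l$. Passing to the reduced type III (hence non-atomic) factor $(1-e_{11})M(1-e_{11})$ and to $a = \frac{\lambda_2}{1-\lambda_1}(1-e_{11})$, whose norm is at most $1$ because $\lambda_1 + \lambda_2 \le 1$, produces $e_{22} \le 1 - e_{11}$ with $\varphi_l(e_{22}) = \lambda_2$. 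Iterating, at the final step the remaining projection $1 - e_{11} - \cdots - e_{m-1,m-1}$ already has $\varphi_l$-value $\lambda_m$, so it may be taken as $e_{mm}$ with no further choice.

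For the partial isometries I would induct on $s$, the base case being $v_1 = e_{11}$. Assume $v_1,\dots,v_{s-1}$ are built. Since all nonzero projections in a type III factor are equivalent, fix a partial isometry $w$ with $w^* w = e_{ss}$ and $w w^* = e_{11}$; then $v_s := uw$ is a partial isometry with $v_s^* v_s = e_{ss}$ and $v_s v_s^* = e_{11}$ for any unitary $u$ of the reduced factor $e_{11} M e_{11}$. Define the $\sigma$-weakly continuous (because the $\varphi_l$ are normal) map $\Phi : e_{11} M e_{11} \to \mathbb{C}^{n(s-1)}$ by $\Phi(x) = (\varphi_l(v_i^* x w))_{l=1,\dots,n,\ i=1,\dots,s-1}$. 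The goal is a unitary $u$ in $e_{11} M e_{11}$ with $\Phi(u) = 0$: applying Lemma \ref{ichi} to $a = \frac12 e_{11}$ in the non-atomic factor $e_{11} M e_{11}$ yields a projection $p \le e_{11}$ with $\Phi(p) = \frac12 \Phi(e_{11})$, and then $u := 2p - e_{11}$ is a self-adjoint unitary of $e_{11} M e_{11}$ with $\Phi(u) = 2\Phi(p) - \Phi(e_{11}) = 0$. Setting $v_s := uw$ gives $\varphi_l(v_i^* v_s) = \Phi(u)_{l,i} = 0$ for all $l$ and all $i < s$, closing the induction and handling every off-diagonal pair $(i,j)$ at step $j$.

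The main obstacle is exactly this off-diagonal step: translating the demand ``find a partial isometry on which prescribed normal functionals vanish'' into the projection-valued statement of Lemma \ref{ichi}. The parametrization $v_s = uw$ together with the symmetry trick $u = 2p - e_{11}$ is what effects this reduction, and it is here that type III enters twice, once to furnish $w$ (equivalence $e_{11} \sim e_{ss}$) and once for non-atomicity of $e_{11} M e_{11}$. The remaining points (the matrix unit relations, normality of $\Phi$, and the norm bounds coming from $\lambda_1 + \cdots + \lambda_i \le 1$) I expect to be routine.
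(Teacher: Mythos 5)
Your proposal is correct and follows essentially the same path as the paper's proof: the diagonal projections come from Lemma \ref{ichi} applied in successive reduced corners, and the off-diagonal step uses the same map $\Phi(x)=(\varphi_l(v_i^*xw))_{l,i}$ on $e_{11}Me_{11}$, Lemma \ref{ichi} at $a=e_{11}/2$, and the symmetry $2p-e_{11}$. The paper's partial isometry $u_{k+1,1}=vp-v(1-p)$ is exactly your $v_s=(2p-e_{11})w$ up to the opposite orientation convention for the connecting partial isometries.
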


\begin{proof}
For a projection $p\in M$ satisfying
$0 \le \varphi_l(p)=\lambda<1$ for $l=1,\ldots,n$
and $0\le t\le 1-\lambda$,
there exists a projection $q$ orthogonal to $p$
such that
$\varphi_l(q)=t$.
To see this, we consider a $\sigma$-weakly continuous
liner map $\Phi:M_{\bar p}\to {\mathbb C}^n$, where we write
$\bar p=1-p$, given by
$\Phi(x)=(\varphi_l(x))_{l=1}^n$,
and apply Lemma \ref{ichi} for
$a=t\bar p/(1-\lambda)$.

Using this fact inductively, we have $\{e_{ii}\}$.

We next define partial isometries $u_{i1}$, $i=1,\ldots,m$, inductively
such that $e_{ij}=u_{i1} u^*_{j1}$ satisfy the conditions of the lemma. 
Let $u_{11}=e_{11}$ and
assume that we have found $u_{i1}$, $i=1,\ldots,k$ with $k<m$.
Let $v$ be a partial isometry in $M$
with $v^*v=e_{11}$, $vv^*=e_{k+1,k+1}$.
Then define a map
\begin{align*}
&\Phi:e_{11}Me_{11}\to{\mathbb C}^{nk}\\
&\Phi(x):=(\varphi_l(vxu_{j1}^*))_{l=1,\ldots,n,j=1,\ldots,k}
\end{align*}
This map
$\Phi$ is $\sigma$-weakly continuous and linear,
so by using Lemma \ref{ichi}
with $a=e_{11}/2$,
we obtain a projection $p\in  e_{11}Me_{11}$ such that
$\Phi(p)=\Phi(e_{11})/2$.
Define
\[
u_{k+1,1}:=
vp-v(1-p).
\]
Since $p\le e_{11}$, an easy computation shows that
$u^*_{k+1,1}u_{k+1,1}=e_{11}$,
$u_{k+1,1}u^*_{k+1,1}=e_{k+1,k+1}$.
Let $e_{k+1,j}=u_{k+1,1}u^*_{j1}$ and 
$e_{j,k+1}=u_{j1}u^*_{k+1,1}$.  Then the $e_{ij}$,
$i,j\le k+1$, form a set of matrix units, and using the
definition of $\Phi$ and that $\Phi(p)=\Phi(e_{11})/2$, we
get for all $l$
\begin{eqnarray*}
\varphi_l(u_{k+1,1}u^*_{j1})&=&\varphi_l((2vp-v)u^*_{j1})\\
&=&2\varphi_l(vpu^*_{j1})-\varphi_l(vu^*_{j1})\\
&=&0.
\end{eqnarray*}

Thus 
$$\varphi_l(e_{j,k+1})=\varphi_l(u_{j1}u^*_{k+1,1})=
\overline{\varphi_l(u_{k+1,1}u^*_{j1})}=0,$$
completing the proof of the lemma.
\end{proof}

We now prove Theorem \ref{ma}.

\begin{proofof}[Theorem \ref{ma}]
First we consider the case $A=M_m({\mathbb C})$.
We choose  a system of
matrix units $\{v_{ij}\}_{i,j=1,\dots,m}$ of $A=M_m({\mathbb C})$
which diagonalizes the density matrix $D_{\rho}$ of $\rho$, i.e., 
$D_{\rho}=\sum_{i=1}^m\lambda_i v_{ii}$.
As $\rho$ is faithful, we have $\lambda_i>0$ for all $i$.
By Lemma \ref{main0}, we obtain
a system of matrix units $\{e_{ij}\}_{i,j=1,\dots,m}$
in $M$ satisfying
\begin{align}\label{ec}
\varphi_n(e_{ij})=\delta_{ij}\lambda_i,\quad
n=1,\dots, k,\quad i,j=1,\dots,m.
\end{align}

Define 
\[
\pi: M_{m}({\mathbb C})\to M,\quad \pi(v_{ij})= e_{ij}.
\]

Then $\pi$ gives a unital homomorphism satisfying the desired condition.

For the general case $A\simeq \oplus_{k=1}^b M_{n_k}({\mathbb C})$,
let $m=\sum_{k=1}^b n_k$.
Let $\hat \rho$ be a faithful extension of $\rho$ to $M_m({\mathbb C})$.
Applying the above result to $M_m({\mathbb C})$ and $\hat \rho$, 
there exists a unital homomorphism
$\hat \pi: M_m({\mathbb C})\to M$ such that
\[
\varphi_n\circ \hat \pi
=\hat\rho,\quad n=1,\dots,k.
\]
The restriction $\pi:=\hat\pi\vert_A$ gives a unital homomorphism
from $A$ to $M$ satisfying
$\varphi_n\circ \pi 
=\rho,$ for $n=1,\dots,k$.
\end{proofof}

Next we prove Corollary \ref{uni}. 

\begin{proofof}[Corollary \ref{uni}]
Let $p$ be the unit of $A$.  Considering $A\oplus{\mathbb{C}}(1-p)$
instead of $A$, we may assume that $A$ contains the unit of $M$ from
the beginning.

First we consider the case 
$A\simeq M_m({\mathbb C})$, $m\in{\mathbb N}$.
Let $\{f_{ij}\}_{i,j=1,\dots,m}$, $\{v_{ij}\}_{i,j=1,\dots,m}$ be
systems of matrix units of $A$ and $M_m({\mathbb C})$, respectively.
Let $\gamma:M_{m}({\mathbb C})\to A$
be an isomorphism given by
$\gamma(v_{ij})=f_{ij}$.

Then
$
\rho:=\omega\circ\gamma
$
is a faithful state on $M_m({\mathbb C})$.
From Theorem \ref{ma}, there exists a unital homomorphism
$\pi : M_{m}({\mathbb C})\to M$ such that $\varphi_n\circ\pi=\rho$,
$n=1,\dots, k$.
The algebras $A$ and $\pi(M_m({\mathbb C}))$ are subalgebras of $M$ isomorphic to $M_m({\mathbb C})$
with complete sets of matrix units $\{f_{ij}\}$
and $\{\pi(v_{ij})\}$.
As in \cite[Lemma 2.1]{hm}, 
if $v\in M$ is a partial isometry with $v^*v=\pi(v_{11})$
and $vv^*=f_{11}$, then $u:=\sum_{i=1}^m \pi(v_{i1}) v^* f_{1i} $
is a unitary in $M$ satisfying $uf_{ij}u^*=\pi(v_{ij})$. 
Hence we have
\[
\varphi_n\circ \Ad u(f_{ij})=\varphi_n(\pi(v_{ij}))=\rho(v_{ij})
=\omega\circ\gamma(v_{ij})=\omega(f_{ij}),
\]
i.e., $\varphi_n\circ \Ad u\vert_A=\omega\vert_A$ for $n=1,\dots,k$.

For the general case $A\simeq \oplus_{l=1}^b M_{n_l}({\mathbb C})$,
let $\{f_{ij}^{(l)}\}_{ij=1,\dots,n_l}$ 
be a system of matrix units of $M_{n_l}({\mathbb C})$ for each $l=1,\dots,b$.
As $M$ is of type III, for all $l=1,\dots,b$, the nonzero projections
$f_{11}^{(1)}$ and $f_{11}^{(l)}$ are mutually equivalent. 
Hence, there exist
partial isometries $v^{(l)}\in M$ such that ${v^{(l)}}^{*}v^{(l)}=f_{11}^{(l)}$
and $v^{(l)}{v^{(l)}}^{*}=f_{11}^{(1)}$.
Set $w_{(k,i)(l,j)}:=f_{i1}^{(k)}{v^{(k)}}^*v^{(l)}f_{1j}^{(l)}$, for
$k,l=1,\dots,b$, $i=1,\dots, n_k$, and $j=1,\dots, n_l$.
Then we have
\begin{align*}
w_{(k,i)(l,j)}^*&=f_{j1}^{(l)}{v^{(l)}}^*v^{(k)}f_{1i}^{(k)}=w_{(l,j)(k,i)},\\
w_{(k,i)(l,j)}w_{(l',j')(k',i')}
&=f_{i1}^{(k)}{v^{(k)}}^*v^{(l)}f_{1j}^{(l)}
f_{j'1}^{(l')}{v^{(l')}}^*v^{(k')}f_{1i'}^{(k')}\\
&=\delta_{ll'}\delta_{jj'}
f_{i1}^{(k)}{v^{(k)}}^*v^{(l)}
f_{11}^{(l)}{v^{(l)}}^*v^{(k')}f_{1i'}^{(k')}\\
&=\delta_{ll'}\delta_{jj'}
f_{i1}^{(k)}{v^{(k)}}^*v^{(l)}
{v^{(l)}}^*v^{(k')}f_{1i'}^{(k')}\\
&=\delta_{ll'}\delta_{jj'}w_{(ki),(k'i')},\\
\sum_{(k,i)}w_{(k,i)(k,i)}&=\sum_{i,k}f_{i1}^{(k)}{v^{(k)}}^*v^{(k)}f_{1i}^{(k)}=\sum_{(k,i)} f_{ii}^{(k)}=1.
\end{align*}

Hence 
$\{w_{(k,i)(l,j)}\}_{(k,i),(l,j)}$ give a system of matrix units of a $C^*$-subalgebra $B$ of $M$ isomorphic to $M_{m}$, for $m:=\sum_{k=1}^b n_k$. As $w_{(ki)(kj)}=f_{i1}^{(k)}f_{1j}^{(k)}=f_{ij}^{(k)}$, $\{w_{(k,i)(l,j)}\}$ is an extension of
$\{f_{ij}^{(k)}\}$ and $A$ is a subalgebra of $B$.
We apply the above argument to $B\simeq M_m({\mathbb C})$ and obtain a unitary $u$ in
$M$ such that $\varphi_i\circ \Ad u\vert_B=\omega\vert_B$.
In particular, we obtain 
$\varphi_i\circ \Ad u\vert_A=\omega\vert_A$ for $i=1,\dots,k$.
\end{proofof}

\section{Embedding of operator algebras with faithful states}
\label{embed}

The above theorem can be extended to the algebra of
the compact operators as follows.

\begin{thm}\label{cpt}
Let $K({\cal H})$ denote the set of all the compact
operators on a separable Hilbert space $\cal H$. Let $\rho$
be a faithful state on $K({\cal H})$.
Let $M$ be a factor of type III with separable predual, 
$\varphi_1,\varphi_2,\dots,\varphi_k$ normal states on $M$. 
Then there exists a homomorphism $\pi$ of $K({\cal H})$
into $M$ such that
\[
\varphi_n\circ\pi=\rho,\quad n=1,\dots,k.
\]
\end{thm}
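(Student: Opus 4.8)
The plan is to realize $K(\mathcal H)$ as the norm closure of an increasing union of matrix algebras and to produce an \emph{infinite} system of matrix units in $M$ on which the $\varphi_n$ take the correct values, imitating the finite construction of Lemma \ref{main0}. Writing $D_\rho$ for the density operator of $\rho$, I would first diagonalize it: choose an orthonormal basis $\{\xi_i\}_{i\ge 1}$ of $\mathcal H$ with $D_\rho=\sum_{i\ge 1}\lambda_i\,\ket{\xi_i}\bra{\xi_i}$, where $\lambda_i>0$ and $\sum_i\lambda_i=1$. Faithfulness of $\rho$ is exactly what forces every $\lambda_i>0$ (and, since $\mathcal H$ is infinite dimensional, there are infinitely many such $\lambda_i$; the finite-dimensional case is just Theorem \ref{ma}). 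With respect to this basis $K(\mathcal H)$ is the norm closure of $\bigcup_m M_m(\mathbb C)$, the finite matrix units being $v_{ij}=\ket{\xi_i}\bra{\xi_j}$, and $\rho(v_{ij})=\delta_{ij}\lambda_i$.

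The aim is then to construct partial isometries $u_{i1}\in M$, $i\ge 1$, so that $e_{ij}:=u_{i1}u_{j1}^*$ form an infinite system of matrix units with $\varphi_n(e_{ij})=\delta_{ij}\lambda_i$ for all $n=1,\dots,k$. First I would build the diagonal projections $e_{ii}$ inductively. Because all $\lambda_i>0$ and $\sum_i\lambda_i=1$, every partial sum $s_m:=\sum_{i=1}^m\lambda_i$ is \emph{strictly} less than $1$, which leaves room at each stage. Given mutually orthogonal $e_{11},\dots,e_{mm}$ with $\varphi_n(e_{ii})=\lambda_i$ for all $n$, their sum $p=\sum_{i\le m}e_{ii}$ satisfies $\varphi_n(p)=s_m<1$ for every $n$, the \emph{same} value for all $n$; the orthogonal-extension step at the start of the proof of Lemma \ref{main0} (apply Lemma \ref{ichi} to $x\mapsto(\varphi_n(x))_{n}$ on $(1-p)M(1-p)$ with $a=\lambda_{m+1}(1-p)/(1-s_m)$) then yields a projection $e_{m+1,m+1}\perp p$ with $\varphi_n(e_{m+1,m+1})=\lambda_{m+1}$. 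It is essential here that $p$ take a common value under all the $\varphi_n$, which holds precisely because $\varphi_n(p)=s_m$ is independent of $n$.

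With the $e_{ii}$ in hand I would run the inductive partial-isometry construction of Lemma \ref{main0} verbatim: set $u_{11}=e_{11}$, and given $u_{11},\dots,u_{k1}$ use equivalence of the nonzero projections $e_{11}\sim e_{k+1,k+1}$ in the type III factor together with Lemma \ref{ichi} applied to $\Phi(x)=(\varphi_n(vxu_{j1}^*))_{n\le k,\,j\le k}$ and $a=e_{11}/2$ to define $u_{k+1,1}=v(2p-e_{11})$, which forces $\varphi_n(e_{k+1,j})=0=\varphi_n(e_{j,k+1})$ for $j\le k$. Nothing in this step refers to an upper bound on the index, so it continues indefinitely and produces $u_{i1}$ for every $i\ge 1$. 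I would then define $\pi$ on finite-rank operators by $\pi(v_{ij})=e_{ij}$ and extend linearly; restricted to each $M_m(\mathbb C)$ this is a $*$-homomorphism carrying matrix units to matrix units, hence contractive, so $\pi$ is contractive on the dense subalgebra $\bigcup_m M_m(\mathbb C)$ and extends to a (necessarily injective, as $K(\mathcal H)$ is simple) $*$-homomorphism of $K(\mathcal H)$ into $M$.

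Finally, for a finite-rank $x=\sum_{i,j}c_{ij}v_{ij}$ one computes $\varphi_n(\pi(x))=\sum_{i,j}c_{ij}\varphi_n(e_{ij})=\sum_i\lambda_i c_{ii}=\rho(x)$, so $\varphi_n\circ\pi$ and $\rho$ agree on the finite-rank operators; since $\varphi_n$, $\rho$, and $\pi$ are all norm-continuous, they agree on all of $K(\mathcal H)$. The only genuine obstacle is the passage from the finite construction to the infinite one: I must verify that the diagonal induction never exhausts the available mass, which is guaranteed by $s_m<1$ for every $m$, and that the off-diagonal induction is uniformly continuable, which it is because each step is self-contained and depends only on finitely many previously chosen partial isometries. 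All convergence questions are thereby absorbed into the standard density-and-continuity argument that extends $\pi$ from the finite-rank operators to $K(\mathcal H)$.
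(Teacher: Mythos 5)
Your proposal is correct and takes essentially the same route as the paper: the paper also diagonalizes $D_\rho$, constructs an infinite system of matrix units $\{e_{ij}\}$ in $M$ with $\varphi_n(e_{ij})=\delta_{ij}\lambda_i$ by running the induction of Lemma \ref{main0} indefinitely, and extends $\pi(v_{ij})=e_{ij}$ by norm continuity, all of which the paper compresses into ``this is proved in the same way as in the proof of Theorem \ref{ma}'' and which you have spelled out in full. The only (harmless) difference is that the paper additionally assumes $\varphi_1$ is faithful by adding a faithful state to the list, which forces $\sum_i e_{ii}=1$; as your argument shows, this is not needed for the present statement, though it is convenient for the subsequent extension to $B(\mathcal{H})$.
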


\begin{proof}
We may assume that $\cal H$ is infinite dimensional, and $\varphi_1$ is
faithful, e.g. by adding a faithful state to the set of $\varphi_i$'s.

Let $\{v_{ij}\}$ be a system of matrix units of $K({\cal H})$ diagonalizing the density matrix $D_{\rho}$ of $\rho$, i.e.,
$D_{\rho}=\sum_{i=1}^{\infty}
\lambda_i v_{ii}$.
As $\rho$ is faithful, we have $\lambda_i>0$ for all $i$.

We claim that there exists a system of matrix
units $\{e_{ij}\}_{i,j\in {\mathbb N}}$
in $M$ satisfying
\begin{align}\label{iad}
\varphi_n(e_{ij})=\delta_{ij}\lambda_i,\quad
n=1,\dots, k,\quad i,j=1,2,\dots.
\end{align}

This is proved in the same way as in the proof of Theorem \ref{ma}.
\end{proof}

A slight rewriting of the above Theorem gives the following.
\begin{cor}
Let $B({\cal H})$ be the set of all the bounded
operators on a separable Hilbert space $\cal H$ and
$\rho$ a faithful normal state on $B({\cal H})$.
Let $M$ be a factor of type III with separable predual and
$\varphi_1,\varphi_2,\dots,\varphi_k$ normal states on $M$. 
Then there exists a homomorphism $\pi$ of $B({\cal H})$
into $M$ such that
\[
\varphi_n\circ\pi=\rho,\quad n=1,\dots,k.
\]
\end{cor}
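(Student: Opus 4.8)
The plan is to deduce this from Theorem~\ref{cpt} by first producing the embedding on the compacts and then extending it $\sigma$-weakly to all of $B({\cal H})$. First I would record that, since $\rho$ is a faithful normal state on $B({\cal H})$, its density matrix can be diagonalized as $D_\rho=\sum_{i=1}^{\infty}\lambda_i v_{ii}$ with $\lambda_i>0$ and $\sum_i\lambda_i=1$, relative to a system of matrix units $\{v_{ij}\}$ of $B({\cal H})$ whose finite linear span is exactly the set of finite-rank operators. As in the proof of Theorem~\ref{cpt}, I would enlarge the family $\varphi_1,\dots,\varphi_k$ by one faithful normal state if necessary, so that we may assume one of the states is faithful.

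Next I would invoke the proof of Theorem~\ref{cpt} to produce a system of matrix units $\{e_{ij}\}_{i,j\in{\mathbb N}}$ in $M$ with $\varphi_n(e_{ij})=\delta_{ij}\lambda_i$ for all $n$ and all $i,j$. Let $N:=\{e_{ij}\}''$ be the von Neumann subalgebra of $M$ they generate; this is a type $\mathrm{I}_\infty$ factor with unit $p=\sum_i e_{ii}$ (the $\sigma$-weak limit of the finite partial sums $\sum_{i\le r}e_{ii}$ as $r\to\infty$), and the assignment $v_{ij}\mapsto e_{ij}$ extends uniquely to a $*$-isomorphism $\pi:B({\cal H})\to N$. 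Being a $*$-isomorphism of von Neumann algebras, $\pi$ is automatically normal. Its restriction to $K({\cal H})$ is a homomorphism into $M$ of exactly the kind produced by Theorem~\ref{cpt}, and by linearity and norm continuity the identity $\varphi_n\circ\pi=\rho$ already holds on the norm-dense span of the matrix units, hence on all of $K({\cal H})$.

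It then remains to upgrade this equality from $K({\cal H})$ to $B({\cal H})$. Both $\rho$ and $\varphi_n\circ\pi$ are normal functionals on $B({\cal H})$: $\rho$ by hypothesis, and $\varphi_n\circ\pi$ as the composition of the normal map $\pi$ with the normal state $\varphi_n$ on $M$. Since $K({\cal H})$ is $\sigma$-weakly dense in $B({\cal H})$ and the two normal, hence $\sigma$-weakly continuous, functionals agree on it, they agree on all of $B({\cal H})$, giving $\varphi_n\circ\pi=\rho$ as desired. It is harmless that $\pi$ need not be unital: we only know $\varphi_n(p)=\sum_i\lambda_i=1$, and $p=1$ is forced only because a faithful state sits among the $\varphi_n$, but the statement asks merely for a homomorphism.

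The main obstacle, and essentially the only step requiring care, is the passage from the compacts to all of $B({\cal H})$: one must verify that the matrix units $\{e_{ij}\}$ genuinely generate a copy of $B({\cal H})$ inside $M$ and that the resulting isomorphism $\pi$ is normal, so that composing with $\varphi_n$ and comparing with $\rho$ on the $\sigma$-weakly dense subalgebra $K({\cal H})$ is legitimate. Everything else is a routine continuity argument, resting on the fact that a normal functional is determined by its values on a $\sigma$-weakly dense subspace.
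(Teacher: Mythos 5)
Your proposal is correct and is essentially the paper's intended argument: the paper presents this corollary as ``a slight rewriting'' of Theorem~\ref{cpt}, namely reuse the matrix units $\{e_{ij}\}$ from that proof (with a faithful state adjoined, which also forces $\sum_i e_{ii}=1$), extend $v_{ij}\mapsto e_{ij}$ to a normal $*$-isomorphism of $B({\cal H})$ onto the type $\mathrm{I}_\infty$ subfactor of $M$ they generate, and pass from $K({\cal H})$ to $B({\cal H})$ by normality and $\sigma$-weak density. The only slip, inessential since your argument needs just norm density in $K({\cal H})$, is the claim that the finite linear span of the $v_{ij}$ is exactly the set of finite-rank operators.
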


We now consider an embedding of a $C^*$-algebra with a
faithful state into a type III factor with finitely
many normal states.

\begin{thm}\label{cstar}
For a $C^*$-algebra $A$ and a faithful state $\omega$ on $A$,
the following conditions are equivalent:
\begin{description}
\item[(i)] The Hilbert space ${\cal H}_{\omega}$ in
the GNS triple
$({\cal H}_{\omega},\pi_{\omega},\Omega_{\omega})$
of $\omega$ is separable and
$\Omega_{\omega}$ is separating for
$\pi_{\omega}(A)''$.
\item[(ii)]
There exists a representation $({\cal H},\rho)$ of $A$
on a separable Hilbert space $\cal H$ 
and a faithful normal state $\sigma$ on $B({\cal H})$
with $\omega=\sigma\circ \rho$.
\item[(iii)]
For any factor $M$ of type III with separable
predual and its normal states $\varphi_1,\dots,\varphi_n$,
there exists an injective homomorphism $\gamma:A\to M$
with $\varphi_j\circ \gamma=\omega$ for all $j=1,\dots,n$.
\end{description}
\end{thm}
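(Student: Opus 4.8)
The plan is to prove the three conditions equivalent by running the cycle (i)$\Rightarrow$(ii)$\Rightarrow$(iii)$\Rightarrow$(i). Two of these implications rest on a single elementary reduction observation, which I would isolate first: \emph{if $\rho\colon A\to B(\mathcal H)$ is a $*$-representation on a separable space $\mathcal H$ and $\xi\in\mathcal H$ is a vector with $\omega=\langle\xi,\rho(\cdot)\xi\rangle$ and $\xi$ separating for $\rho(A)''$, then (i) holds.} Indeed, the cyclic subspace $\mathcal K=\overline{\rho(A)\xi}$ together with $\xi$ carries a GNS triple for $\omega$, so $\mathcal H_\omega\cong\mathcal K$ is separable and $\pi_\omega(A)''$ is the compression $\rho(A)''|_{\mathcal K}$ (with $P\in\rho(A)'$ the projection onto $\mathcal K$). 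Since any $m\in\rho(A)''$ with $m|_{\mathcal K}\xi=m\xi=0$ already vanishes by the separating hypothesis, the image $m|_{\mathcal K}$ vanishes too; hence $\xi=\Omega_\omega$ is separating for $\pi_\omega(A)''$, which is exactly (i).

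The main content is (i)$\Rightarrow$(ii), and this is the step I expect to carry the real weight. I would take $\mathcal H=\mathcal H_\omega$ and $\rho=\pi_\omega$, so the task reduces to extending the faithful normal state $\psi:=\langle\Omega_\omega,(\cdot)\Omega_\omega\rangle$ on $N:=\pi_\omega(A)''$ to a \emph{faithful} normal state on all of $B(\mathcal H_\omega)$. The key point is that $\Omega_\omega$ is separating for $N$ (the hypothesis), hence cyclic for the commutant $N'$. For every unitary $V\in N'$ the vector state $x\mapsto\langle V\Omega_\omega,xV\Omega_\omega\rangle$ restricts to $\psi$ on $N$, because $V$ commutes with $N$ and $V^*V=1$. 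Since the unitaries linearly span $N'$ and $\overline{N'\Omega_\omega}=\mathcal H_\omega$, separability lets me pick countably many unitaries $V_k\in N'$ for which $\{V_k\Omega_\omega\}$ is total in $\mathcal H_\omega$, and I set $\sigma:=\sum_k 2^{-k}\langle V_k\Omega_\omega,(\cdot)V_k\Omega_\omega\rangle$. This is a normal state extending $\psi$, and it is faithful since $\sigma(x^*x)=\sum_k 2^{-k}\|xV_k\Omega_\omega\|^2=0$ forces $x$ to annihilate a dense set. Then $\sigma\circ\pi_\omega=\omega$, which is (ii).

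For (ii)$\Rightarrow$(iii) I would simply invoke the already-established embedding of $B(\mathcal H)$: applying the Corollary above to the faithful normal state $\sigma$ produces a homomorphism $\Pi\colon B(\mathcal H)\to M$ with $\varphi_j\circ\Pi=\sigma$ for all $j$, and then $\gamma:=\Pi\circ\rho\colon A\to M$ satisfies $\varphi_j\circ\gamma=\sigma\circ\rho=\omega$; injectivity is automatic because $\gamma(a)=0$ would give $\omega(a^*a)=\varphi_j(\gamma(a^*a))=0$, forcing $a=0$ by faithfulness of $\omega$. Finally (iii)$\Rightarrow$(i) is an instance of the reduction observation: I instantiate (iii) with a fixed type~III factor with separable predual (for instance the hyperfinite III$_1$ factor), $n=1$, and $\varphi_1$ faithful normal, obtaining an injective $\gamma\colon A\to M$ with $\varphi_1\circ\gamma=\omega$; realizing $M$ on $\mathcal H_{\varphi_1}$ with cyclic separating vector $\xi$, the vector $\xi$ is separating for $\gamma(A)''\subseteq M$, and the observation delivers (i). (The same observation applied to the standard Hilbert--Schmidt representation of $B(\mathcal H)$, where $\Omega_\sigma=D_\sigma^{1/2}$ is separating, gives the shorter (ii)$\Rightarrow$(i) directly, if one prefers a cycle bypassing the compact-operator result.) The only genuinely delicate point is the faithful extension in (i)$\Rightarrow$(ii); once the cyclicity of $\Omega_\omega$ for $N'$ is exploited, everything else is a routine reduction together with the $B(\mathcal H)$ embedding.
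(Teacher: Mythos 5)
Your proposal is correct and follows essentially the same route as the paper: the cycle (i)$\Rightarrow$(ii)$\Rightarrow$(iii)$\Rightarrow$(i), with a density-matrix state built from countably many vectors in the commutant orbit of $\Omega_\omega$ for (i)$\Rightarrow$(ii), composition with the $B({\cal H})$ embedding corollary for (ii)$\Rightarrow$(iii), and compression to the GNS subspace for (iii)$\Rightarrow$(i). The only (harmless) variation is cosmetic: where you use unitaries $V_k\in N'$ so that each vector state already restricts to $\psi$ on $N$, the paper uses contractions $x_n\in(\pi_\omega(A)')_1$ with a normalizing element $x_0=\sqrt{1-\sum_n x_n^*x_n/2^n}$ to achieve the same effect.
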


\begin{proof}
Suppose condition (i) holds.  Then
$\Omega_{\omega}$ is cyclic for $\pi_{\omega}(A)'$.
Therefore, using the separability of
${\cal H}_{\omega}$, we have
a sequence $\{x_n\}_{n=1}^{\infty}\subset (\pi_{\omega}(A)')_1$
such that $\{x_n\Omega_{\omega}:n\in{\mathbb N}\}$ 
spans ${\cal H}_{\omega}$.
Let $x_0:=\sqrt{1-\sum_n x_n^*x_n/2^n}$,
and define a state $\sigma$ on $B({\cal H}_{\omega})$
given by the density matrix
$\sum_{n=0}^{\infty}
\ket{x_n\Omega_{\omega}}\bra{x_n\Omega_{\omega}}/2^n$.
This $\sigma$ is faithful  and normal.
Let $\rho=\pi_{\omega}$.
We can check $\sigma\circ\rho=\omega$.  Hence (ii) holds.

Now suppose condition (ii) holds. We show (iii).
By Theorem \ref{cpt}, we have an injective homomorphism 
$\pi:K({\cal H})\to M$
such that $\sigma\vert_{K({\cal H})}=\varphi\circ\pi$.
We denote the extension of $\pi$ to $B({\cal H})$ by
$\hat \pi$. Then from the way we have constructed $\pi$,
we obtain $\sigma=\varphi\circ\hat\pi$.
Define $\gamma:=\hat\pi\circ\rho:A\to M$.
Then we obtain  $\varphi\circ \gamma=\varphi\circ\hat\pi\circ\rho=
\sigma\circ\rho=\omega$.

Finally suppose condition (iii) holds, and we show this implies (i).
To see this, 
fix a factor $M$ of type III with a faithful normal state $\varphi$,
and let $({\cal H}_{\varphi},\pi_{\varphi},\Omega_{\varphi})$
be its GNS triple.
We obtain $\gamma$ as in (iii).
Let $K:=\overline{\pi_{\varphi}\circ\gamma(A)\Omega_{\varphi}}$ and
$\beta$ the restriction of $\pi_{\varphi}\circ\gamma$ to $K$.
Then $(K,\beta,\Omega_{\varphi})$
is the GNS triple of $\omega$.
As $\Omega_{\varphi}$ is separating for 
$\pi_{\varphi}(M)$, it is separating for
$\beta(A)''$, and (i) holds.
\end{proof}

As an immediate corollary, we obtain the following.
\begin{cor}
Let $N$ be a von Neumann algebra with separable predual and
$\psi$ a faithful normal state on $N$.  Then for
any factor $M$ of type III with separable
predual and a normal state $\varphi$ on $M$,
there exists an injective homomorphism $\pi:N\to M$
with $\varphi\circ \pi=\psi$.

\end{cor}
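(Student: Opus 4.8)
The plan is to deduce this directly from Theorem \ref{cstar}, applied with the $C^*$-algebra there taken to be $N$ itself, with $\omega=\psi$ and $n=1$. Once the hypotheses of that theorem are verified, its implication (i)$\Rightarrow$(iii) produces, for the given type III factor $M$ and normal state $\varphi$, an injective homomorphism $\gamma:N\to M$ with $\varphi\circ\gamma=\psi$, and we simply set $\pi:=\gamma$. Thus the entire content reduces to checking condition (i) of Theorem \ref{cstar} for the faithful normal state $\psi$ on $N$, namely that the GNS Hilbert space $\mathcal H_\psi$ is separable and that the cyclic vector $\Omega_\psi$ is separating for $\pi_\psi(N)''$.

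First I would record the two relevant facts about the GNS triple $(\mathcal H_\psi,\pi_\psi,\Omega_\psi)$ of a faithful normal state on a von Neumann algebra with separable predual. Since $\psi$ is normal, its GNS representation $\pi_\psi$ is normal (i.e.\ $\sigma$-weakly continuous); consequently $\pi_\psi(N)$ is already $\sigma$-weakly closed, so $\pi_\psi(N)''=\pi_\psi(N)$. Separability of $\mathcal H_\psi$ then follows from the separability of the predual of $N$, since $\mathcal H_\psi=\overline{\pi_\psi(N)\Omega_\psi}$ is the cyclic space of a normal representation of an algebra with separable predual. For the separating property it suffices, by the identification $\pi_\psi(N)''=\pi_\psi(N)$, to show that $\Omega_\psi$ is separating for $\pi_\psi(N)$: if $\pi_\psi(a)\Omega_\psi=0$ for some $a\in N$, then $\psi(a^*a)=\nn{\pi_\psi(a)\Omega_\psi}^2=0$, and faithfulness of $\psi$ forces $a^*a=0$, hence $a=0$. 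This simultaneously gives injectivity of $\pi_\psi$ and the desired separating property, so condition (i) holds and Theorem \ref{cstar} applies.

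The only mild subtlety, and the step I would treat most carefully, is the passage from ``$\Omega_\psi$ is separating for $\pi_\psi(N)$'' to ``separating for $\pi_\psi(N)''$'', which is precisely where normality of the representation $\pi_\psi$ enters to identify the two algebras; without it the double commutant could be strictly larger and the separating conclusion would not be immediate. Everything else is a routine unpacking of the GNS construction, so I expect no further obstacle beyond invoking the already-established Theorem \ref{cstar}.
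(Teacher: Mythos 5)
Your proposal is correct and is exactly the argument the paper intends: the paper states this as an immediate consequence of Theorem \ref{cstar}, and your verification of condition (i) for $(N,\psi)$ — normality of $\pi_\psi$ giving $\pi_\psi(N)''=\pi_\psi(N)$, separability of $\mathcal{H}_\psi$ from the separable predual, and the separating property from faithfulness — followed by the implication (i)$\Rightarrow$(iii) with $n=1$ is precisely the intended deduction.
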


Another easy corollary is as follows, by a well-known result
on the KMS condition \cite[Corollary 5.3.9]{BR}.

\begin{cor}
Suppose that we have a $C^*$-algebra $A$, a state $\varphi$ on $A$, a
one-parameter automorphism group $\{\alpha_t\}_{t\in{\mathbb R}}$ such
that these satisfy the KMS condition.  Then the pair $(A,\varphi)$ 
satisfies the (equivalent) conditions in Theorem \ref{cstar}.
\end{cor}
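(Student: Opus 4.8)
The plan is to verify that the pair $(A,\varphi)$ satisfies condition \textbf{(i)} of Theorem \ref{cstar}, since by the equivalence proved there this already yields the full conclusion. Thus I must check two things about the GNS triple $({\cal H}_\varphi,\pi_\varphi,\Omega_\varphi)$ of $\varphi$: that ${\cal H}_\varphi$ is separable, and that $\Omega_\varphi$ is separating for $\pi_\varphi(A)''$.

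First I would form the GNS triple $({\cal H}_\varphi,\pi_\varphi,\Omega_\varphi)$ of the KMS state $\varphi$. The essential input is the cited modular-theoretic fact \cite[Corollary 5.3.9]{BR}: whenever $\varphi$ satisfies the KMS condition with respect to the one-parameter group $\{\alpha_t\}_{t\in{\mathbb R}}$, the cyclic vector $\Omega_\varphi$ is separating for $\pi_\varphi(A)''$. Concretely, the KMS condition forces $\{\alpha_t\}$ to be implemented on ${\cal H}_\varphi$ by (a rescaling of) the modular automorphism group attached to $\Omega_\varphi$, and the cyclic-and-separating property of $\Omega_\varphi$ for $\pi_\varphi(A)''$ is exactly the Tomita--Takesaki datum underlying that implementation. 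This is precisely the separating half of condition \textbf{(i)}. For the separability half, I would note that $\pi_\varphi(A)\Omega_\varphi$ is dense in ${\cal H}_\varphi$, so ${\cal H}_\varphi$ is separable as soon as $A$ is norm-separable. Combining the two points, condition \textbf{(i)} of Theorem \ref{cstar} holds for $(A,\varphi)$, and the theorem then delivers the equivalent conditions.

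The step I expect to carry essentially all the weight is the invocation of \cite[Corollary 5.3.9]{BR}: once the separating property of $\Omega_\varphi$ is granted, nothing further needs to be constructed, so the argument is a direct application rather than a genuine proof. The only points requiring real care are bookkeeping ones, namely confirming that the ``separating'' conclusion of \cite{BR} is literally the condition appearing in \textbf{(i)}, and checking the auxiliary hypotheses of Theorem \ref{cstar}, in particular faithfulness of $\varphi$. Here the separating property itself is the lever: from $\varphi(a^*a)=\nn{\pi_\varphi(a)\Omega_\varphi}^2$ one sees that $\varphi(a^*a)=0$ forces $\pi_\varphi(a)\Omega_\varphi=0$ and hence $\pi_\varphi(a)=0$, so faithfulness of $\varphi$ on $A$ reduces to injectivity of $\pi_\varphi$ — the one auxiliary fact I would isolate and pin down before quoting Theorem \ref{cstar}.
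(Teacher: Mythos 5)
Your proposal is correct and is essentially the paper's own argument: the paper gives no proof beyond the citation of \cite[Corollary 5.3.9]{BR}, which is exactly the separating property of $\Omega_\varphi$ for $\pi_\varphi(A)''$ that you use to verify condition (i), with separability of ${\cal H}_\varphi$ coming from the (implicit) norm-separability of $A$. One caution on your final point: injectivity of $\pi_\varphi$ is not something you can ``pin down'' from the KMS condition alone, since $\ker\pi_\varphi$ can be a nonzero $\alpha$-invariant ideal (for instance, a KMS state of one direct summand extended by zero to a direct sum), so faithfulness of $\varphi$ must be read as a standing hypothesis inherited from the formulation of Theorem \ref{cstar} --- exactly as the paper implicitly does --- rather than as a consequence to be verified.
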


\begin{rem}{\rm
Note that a general faithful state on a $C^*$-algebra $A$ does
not satisfy condition (i) of Theorem \ref{cstar} at all, as shown in 
\cite{T} by an example due to Pedersen.  The $C^*$-algebra in \cite{T}
is a very basic one, $C([0,1])\otimes M_2({\mathbb C})$.  A slight
modification of the argument there also works for a simple
$C^*$-algebra $A_\theta\otimes M_2({\mathbb C})$, where $A_\theta$
is the irrational rotation $C^*$-algebra.

In Theorem 3 of the same paper \cite{T}, Takesaki gives a
sufficient condition for our condition
(i) in Theorem \ref{cstar} and
calls it the quasi-KMS condition, but it seems difficult to
check this condition for a given example.
}\end{rem}

\begin{rem}{\rm 
In all the above cases, we considered embeddings into a type
III factor, but actually any properly infinite von Neumann algebra
with separable predual works.  This is because if we have
a properly infinite von Neumann algebra and normal states on it,
we simply restrict the states on a type III factor which is found
as a subalgebra of the original von Neumann algebra.  It is easy 
to see that if a von Neumann algebra with separable predual has
a finite direct summand, this type of embedding is impossible, so
actually this embeddability characterize proper infiniteness of 
a von Neumann algebra with separable predual. 
}\end{rem}

\section{Factors of type II$_1$}\label{two1}

The direct analogue of Theorem \ref{ma} for finite factors is
trivially false. For example, if $M$ is of type II$_1$ with
trace $\tau$ and $\rho$ is not a trace on $A$, then the conclusion
of Theorem \ref{ma} for $\varphi_1=\tau$ is clearly false.
However, if we restrict the choice of $\omega$ in
Corollary \ref{uni}, we obtain a positive result.

\begin{thm}\label{ma2}
Let $\varphi_1,\dots,\varphi_k$ be normal states on a 
factor $M$ of type II$_1$ with the unique trace $\tau$.
Let $A$ be a $C^*$-subalgebra of $M$ isomorphic to
$M_{m}({\mathbb C})$ with $1\in A$. Then there exists a
unitary operator $u\in M$ satisfying
$\varphi_i\circ \Ad u\vert_{A}=\tau\vert_{A}$ for $i=1,\dots,k$.
\end{thm}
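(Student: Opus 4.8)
The plan is to reduce the statement, exactly as in Corollary \ref{uni}, to the construction of a suitable system of matrix units in $M$ together with an intertwining unitary, and to isolate the single place where the type III argument must be modified for a II$_1$ factor. The observation that drives everything, and that makes the restriction to the trace natural, is elementary: since $\tau$ is a trace on $M$ and $A\cong M_m({\mathbb C})$ contains $1$, the restriction $\tau|_A$ is forced to be the normalized trace on $A$, so for any system of matrix units $\{f_{ij}\}_{i,j=1}^m$ of $A$ one has $\tau(f_{ij})=\delta_{ij}/m$. Hence the desired conclusion $\varphi_n\circ\Ad u|_A=\tau|_A$ is equivalent to saying that the conjugated units $e_{ij}:=u^*f_{ij}u$ satisfy $\varphi_n(e_{ij})=\delta_{ij}/m$. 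It therefore suffices to construct a complete system of matrix units $\{e_{ij}\}_{i,j=1}^m$ in $M$, with $\sum_i e_{ii}=1$ and $\varphi_n(e_{ij})=\delta_{ij}/m$ for $n=1,\dots,k$, and then to produce a unitary $u$ with $\Ad u(f_{ij})=e_{ij}$.

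Granting such a system, the unitary is obtained just as in the proof of Corollary \ref{uni}: both $\{f_{ij}\}$ and $\{e_{ij}\}$ are complete systems of matrix units for unital copies of $M_m({\mathbb C})$ in $M$, and since $\tau(e_{11})=1/m=\tau(f_{11})$ the projections $e_{11}$ and $f_{11}$ are Murray--von Neumann equivalent in the II$_1$ factor $M$; a partial isometry implementing this equivalence yields $u$ via the formula of \cite[Lemma 2.1]{hm}. Thus the substance lies entirely in the matrix-unit construction, where I would repeat the proof of Lemma \ref{main0} with a single modification. Because $M$ is of type II$_1$ it is non-atomic, so Lemma \ref{ichi} applies unchanged. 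I would carry out the inductive construction of the diagonal projections for the enlarged family of states $\{\varphi_1,\dots,\varphi_k,\tau\}$, all with the common target value $1/m$; this is consistent precisely because the prescribed $\varphi_n$-value and the trace value coincide at $1/m$. One obtains orthogonal projections $e_{11},\dots,e_{mm}$ with $\varphi_n(e_{ii})=\tau(e_{ii})=1/m$, and by taking $e_{mm}=1-\sum_{i<m}e_{ii}$ at the last step one secures $\sum_i e_{ii}=1$. The off-diagonal units are then built exactly as in the second half of Lemma \ref{main0}, applying Lemma \ref{ichi} to the map $x\mapsto(\varphi_n(vxu_{j1}^*))_{n,j}$ to force $\varphi_n(e_{ij})=0$ for $i\neq j$; the matching trace values vanish automatically by traciality.

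The one step at which the type III proof genuinely breaks down --- and hence the main obstacle --- is the existence, at each stage of the off-diagonal induction, of a partial isometry $v$ with $v^*v=e_{11}$ and $vv^*=e_{k+1,k+1}$. In a type III factor any two nonzero projections are equivalent, so this is free, whereas in a II$_1$ factor it requires $\tau(e_{11})=\tau(e_{k+1,k+1})$. The sole purpose of feeding $\tau$ into the diagonal construction above is to force $\tau(e_{ii})=1/m$ for every $i$, so that all diagonal projections carry the same trace and the required partial isometries exist. This also clarifies why the hypothesis restricts $\omega$ to the trace: matching a general target $\omega|_A$ would demand diagonal values $\lambda_i=\omega(f_{ii})$ for the $\varphi_n$ together with the unavoidable value $\tau(e_{ii})=1/m$ dictated by the matrix-unit structure, and these two prescriptions are compatible only when $\omega|_A$ is already the normalized trace.
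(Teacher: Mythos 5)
Your proof is correct and follows essentially the same route as the paper: the paper's own (one-line) proof likewise adjoins $\tau$ to the family of states, runs the construction of Lemma \ref{main0} with all diagonal values equal to $1/m$, and concludes via the argument of Corollary \ref{uni}. Your write-up merely makes explicit what the paper leaves implicit, namely that equality of traces is what supplies the partial isometries needed in the II$_1$ setting, both in the off-diagonal induction and in the final conjugation step.
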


\begin{proof}
We may assume that $\varphi_1=\tau$ is the unique trace on $M$.
We proceed as in the proof of Theorem \ref{ma}.  The only difference
is that we take $\tau(e_{ii})=1/m$ instead of
the proof of Lemma \ref{main0}.
\end{proof}

{\small
\noindent
{\bf Acknowledgements.} 
Y.K. thanks the CMTP in Rome and Universit\'e Paris VII, and
Y.O. thanks University of Oregon and
University of Oslo for hospitality during their stays
when parts of this work were done. Y.K is grateful to R. Longo
and G. Skandalis for useful remarks, Y.O.
to N. Christopher Phillips 
for helpful discussions and E.S. to S. Neshveyev.}

\end{document}